\DeclareRobustCommand*{\bfseries}{%
  \not@math@alphabet\bfseries\mathbf
  \fontseries\bfdefault\selectfont
  \boldmath
}
\theoremstyle{plain}
\newtheorem{theorem}{Theorem}[section]
\newtheorem{proposition}[theorem]{Proposition}
\newtheorem{definition}[theorem]{Definition}
\newtheorem{remark}[theorem]{Remark}
\newtheorem{example}[theorem]{Example}
\theoremstyle{nonumberplain}
\newtheorem{proof}{Proof}
\numberwithin{equation}{section}
\let\oldbibliography\thebibliography
\renewcommand{\thebibliography}[1]{%
  \oldbibliography{#1}%
  \small
  \setlength{\itemsep}{0pt}%
  \setlength{\parskip}{0pt}%
}
\newcommand{\Lie}[1]{\operatorname{\textsl{#1}}}
\newcommand{\GL}{\Lie{GL}}
\newcommand{\SU}{\Lie{SU}}
\newcommand{\Spin}{\Lie{Spin}}
\newcommand{\Ld}{\mathcal L}
\newcommand{\CST}{\mathscr C}
\newcommand{\sgn}{sgn}
\newcommand{\Hodge}{{*}}
\newcommand{\bC}{{\mathbb C}}
\newcommand{\bR}{{\mathbb R}}
\newcommand{\bZ}{{\mathbb Z}}
\DeclareMathOperator{\im}{Im}
\DeclareMathOperator{\re}{Re}
\DeclareMathOperator{\Tr}{Tr}
\DeclareMathOperator{\diag}{diag}
\DeclareMathOperator{\rank}{rank}
\DeclareMathOperator{\vol}{vol}
\newcommand{\hook}{{\lrcorner\,}}
\newcommand{\NB}{\nabla}
\newcommand{\LC}{\NB^{\textup{LC}}}
\DeclarePairedDelimiter{\norm}{\lVert}{\rVert}
\DeclarePairedDelimiter{\Span}{\langle}{\rangle}
\DeclarePairedDelimiter{\base}{\lparen}{\rparen}
\renewcommand{\bfdefault}{b}
\newcommand{\br}{\hspace{0pt}}
\newcommand{\bdash}{-\br} %dash allowing hyphen in next word
\newcommand{\eqbreak}{\\&\qquad}
\begin{document}
                                            
\thispagestyle{empty}
\begin{small}
  \begin{flushright}
    IMADA Preprint 2011\\
    CP\textsuperscript3-ORIGINS: 2011-9
  \end{flushright}
\end{small}

\bigskip
 
\begin{center}
  \LARGE\bfseries Spin(7)-manifolds with \\
  three-torus symmetry
\end{center}
\begin{center}
  \Large Thomas Bruun Madsen
\end{center}

\begin{abstract}
  Metrics of exceptional holonomy are vacuum solutions to the Einstein
  equation. In this paper we describe manifolds with holonomy
  contained in \( \Spin(7) \) preserved by a three-torus symmetry in
  terms of tri-symplectic geometry of four-manifolds. These complement
  examples that have appeared in the context of domain wall problems
  in supergravity.
\end{abstract}

\let\thefootnote\relax\footnotetext{2010 Mathematics Subject
  Classification: Primary 53C15; Secondary 53C29, 53D20, 70G45}

\section{Introduction}
\label{sec:introduction}
Metrics of exceptional holonomy have received much attention from both
mathematicians and physicists over the years. The mathematical
motivation for studying exceptional holonomy metrics began with
Berger's classification of Riemannian holonomy groups
\cite{Berger:hol}, though their existence was first shown much later
in Bryant's paper \cite{Bryant:exceptional}. Significant results then
followed, in particular it is worth mentioning the complete
exceptional holonomy metrics discovered by Bryant and Salamon
\cite{Bryant-Salamon:exceptional} and Joyce's construction
\cite{Joyce:G2-1-2,Joyce:Spin7} of compact Riemannian manifolds with
holonomy \( G_2 \) and \( \Spin(7) \). In this paper we focus on
holonomy \( \Spin(7) \)-metrics. From the physical perspective one
motivation for studying these metrics comes from superstring theories
\cite{Acharya-G:M,Cvetic-GLP:Spin7-2,Cvetic-GLP:Spin7-4,Gukov-S:Spin7,Santillan-S:Sp7G2}.
Recently, we \cite{Madsen-S:multi-moment} used the notion of
multi-moment map to study torsion-free \( G_2 \)-metrics admitting an
isometric action of \( T^2 \). In this paper we use similar ideas to
study holonomy \( \Spin(7) \)-metrics with \( T^3 \) symmetry;
symmetry groups of \( \rank \) three fit well with Reidegeld's study
\cite{Reidegeld:cohom1} of cohomogeneity one \( \Spin(7) \)-metrics.

The paper is organised as follows. In section \ref{sec:Sp7red} we
briefly explain the notion of multi-moment maps for geometries with a
closed four-form and \( T^3 \) symmetry. We then study how to reduce a
torsion-free \( \Spin(7) \)-manifold to a tri-symplectic four-manifold
and thereafter, in section \ref{sec:Sp7inv}, explain how to obtain all
torsion-free \( \Spin(7) \)-manifolds with free \( T^3 \) symmetry
starting from tri-symplectic four-manifolds. In the final section of
the paper we present two examples illustrating our reduction and
reconstruction procedures. One of the examples complements previous
ones that have appeared in the context of domain wall problems in
supergravity theories
\cite{Gibbons-LPS:domain-walls,Santillan-Gi:toric}.

\paragraph*{Acknowledgements} It is a pleasure to thank Andrew Swann
for enlightening discussions and Bent {\O}rsted for useful comments. I
also acknowledge financial support from \textsc{ctqm},
\textsc{geomaps} and \textsc{opalgtopgeo}.

\section{Reduction via multi-moment maps}
\label{sec:Sp7red}

In \cite{Madsen-S:multi-moment} we developed a notion of multi-moment
map for geometries with a closed three-form. We explain in
\cite{Madsen:phdtorsion} how this idea generalises to higher degree
forms. For a manifold \( Y \) endowed with a closed four-form \( \Phi
\) and an action of a three-torus preserving \( \Phi \) the definition
is particularly simple. A \emph{multi-moment map}, for \( T^3 \)
acting on \( (Y,\Phi) \), is an invariant function \(
\nu\colon\,Y\to\bR \) such that
\begin{equation}
  \label{eq:mmmaprel}
  d\nu=\Phi(U_1,U_2,U_3,\cdot),
\end{equation}
where the vector fields \( U_1 \), \( U_2 \) and \( U_3 \) generate
the \( T^3 \) action. Following \cite[Theorem
3.1(i)]{Madsen-S:multi-moment} one finds that such a multi-moment map
is guaranteed to exist provided that \( b_1(Y)=0 \).

\begin{remark}
  Note that closedness of the one-form \( U_3\hook U_2\hook
  U_1\hook\Phi \) follows by applying Cartan's formula: \(
  0=\Ld_{U_1}\Phi=d(U_1\hook\Phi) \), \(
  0=\Ld_{U_2}(U_1\hook\Phi)=d(U_2\hook U_1\hook\Phi) \), \(
  0=\Ld_{U_3}(U_2\hook U_1\hook\Phi)=d(U_3\hook U_2\hook U_1\hook\Phi)
  \).
\end{remark}

Let us now recall the fundamental aspects of \( \Spin(7) \)-geometry
following \cite{Bryant:exceptional}. On \( \bR^8 \) we consider the
four-form \( \Phi_0 \) given by
\begin{equation}
  \label{eq:Phi0}
  \begin{split}
    \Phi_0 = e_{1234} & + (e_{12}+e_{34})(e_{56}+e_{78})+(e_{13}-e_{24})(e_{57}-e_{68})\\
    & -(e_{14}+e_{23})(e_{58}+e_{67})+e_{5678},
  \end{split}
\end{equation}
where \( e_1,\dots,e_8 \) is the standard dual basis and wedge signs
have been omitted. The stabiliser of \( \Phi_0 \) is the compact \( 21
\)-dimensional Lie group
\begin{equation*}
  \Spin(7) = \{\, g\in\GL(8,\bR) : g^*\Phi_0=\Phi_0\,\}.
\end{equation*}
This group preserves the standard metric \( g_0 = \sum_{i=1}^8{e_i}^2
\) on \( \bR^8 \) and the volume form \( \vol_0 = e_{12345678} \).
These tensors are uniquely determined by \( \Phi_0 \) via the
relations \( 14\vol_0=\Phi_0^2 \) and \( (Y\hook X\hook \Phi_0)\wedge
(Y\hook X\hook \Phi_0)\wedge \Phi_0 = 6 \norm{X \wedge Y }^2 \vol_0
\), cf. \cite{Kar-G2Sp7}. The form \( \Phi_0 \) is self-dual, meaning
\( \Hodge\Phi_0 =\Phi_0 \).

A \(\Spin(7) \)-structure on an eight-manifold \( Y \) is given by a
four-form \( \Phi\in\Omega^4(Y) \) which is linearly equivalent at
each point to \( \Phi_0 \). It determines a metric \( g \) and a
volume form \( \vol \). The \( \Spin(7) \)-structure is called
\emph{torsion-free} if the form \( \Phi \) is parallel with respect to
the Levi-Civita connection, meaning \( \LC \Phi = 0 \). This happens
precisely when \( \Phi \) is closed. One then calls \( (Y,\Phi) \) a
torsion-free \( \Spin(7) \)-manifold. In this situation the metric \(
g \) has holonomy contained in \( \Spin(7) \) and is Ricci-flat. In
particular, \( g \) is real-analytic in harmonic coordinates.

Since a torsion-free \( \Spin(7) \)-manifold comes equipped with a
closed four-form, we may study multi-moment maps for such manifolds.
Assume that \( (Y,\Phi) \) has a three-torus symmetry, generated by
vector fields \( U_i \), necessarily real-analytic
\cite[Theorem~2.3]{Kob:transgrp}, and that there is a non-constant
multi-moment map \( \nu \). Then \( d\nu = \Phi(U_1,U_2,U_3,\cdot) \)
is non-zero if and only if \( U_1 \), \( U_2 \) and \( U_3 \) are
linearly independent, cf. \cite{Fern:Spin7}.  So \( T^3 \) acts
locally freely on some open set \( Y_0 \subset Y \).

Let us define three two-forms on \( Y_0 \) by
\begin{equation*}
  \omega_1=U_2\hook U_3\hook \Phi, \quad \omega_2=U_3\hook U_1\hook \Phi,\quad \omega_3 = U_1\hook U_2\hook \Phi.
\end{equation*}
To relate these to the \( \Spin(7) \)-structure we introduce two \(
\bR^3 \)-valued one-forms \( \theta=(\theta_1,\theta_2,\theta_3) \)
and \( \Theta=(\Theta_1,\Theta_2,\Theta_3) \). The one-form \( \theta
\) is defined by the formula \( \theta=U^\flat G^{-1} \), where \(
U^\flat \) has entries \( U_i^\flat=g(U_i,\cdot) \), and \(
G^{-1}=(g^{ij}) \) denotes the inverse of the matrix \( G=(g_{ij}) \)
that has entries \( g_{ij}=g(U_i,U_j) \). Note that \(
\theta_i(U_j)=\delta_{ij} \). The second \( \bR^3 \)-valued one-form
is given by the formula \( \Theta = h^2U^\flat \), where \( h \) is
the positive real-analytic function \( h=\sqrt{\det(G^{-1})} \);
componentwise we have \( \Theta_i=h^2\sum_{j=1}^3g_{ij}\theta_j \).

\begin{proposition}
  \label{prop:Phi}
  On \( Y_0 \), the four-form \( \Phi \) is
  \begin{equation}
    \label{eq:Sp7form}
    \begin{split}
      \Phi&=d\nu\wedge\left(2\,\theta_2\wedge\theta_3\wedge\theta_1+\Theta_1\wedge\omega_1+\Theta_2\wedge\omega_2+\Theta_3\wedge\omega_3\right)\eqbreak
      +\theta_3\wedge\theta_2\wedge\omega_1+\theta_1\wedge\theta_3\wedge\omega_2+\theta_2\wedge\theta_1\wedge\omega_3
      + \Hodge(d\nu\wedge\theta_3\wedge\theta_2\wedge\theta_1).
    \end{split}
  \end{equation}
\end{proposition}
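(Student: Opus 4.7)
The plan is to decompose $\Phi$ according to the orthogonal splitting $TY_0 = V \oplus H$, where $V$ is spanned by $U_1, U_2, U_3$ and $H = V^\perp$, and verify \eqref{eq:Sp7form} bidegree by bidegree. Write $\Phi = \sum_{p+q=4}\Phi^{(p,q)}$, where $\Phi^{(p,q)}$ has $p$ indices in $V^*$ and $q$ in $H^*$, so only the four sectors $(3,1), (2,2), (1,3), (0,4)$ occur. The one-forms $\theta_i$ and $\Theta_i = h^2 U_i^\flat$ are purely vertical, while $d\nu$ is purely horizontal since $d\nu(U_i) = \Phi(U_1,U_2,U_3,U_i) = 0$ by antisymmetry; together these facts fix the bidegree of every term in \eqref{eq:Sp7form}.

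The sectors $\Phi^{(3,1)}$ and $\Phi^{(2,2)}$ I would obtain by direct contraction. Since $\Phi^{(3,1)}$ must have the shape $\theta_1 \wedge \theta_2 \wedge \theta_3 \wedge \alpha$ for a horizontal one-form $\alpha$, three contractions with the $U_i$ give $\alpha = d\nu$. Writing $\Phi^{(2,2)} = \sum_{a<b}\theta_a \wedge \theta_b \wedge \beta_{ab}$ and computing $\omega_i = U_j \hook U_k \hook \Phi$ with $(i,j,k)$ cyclic identifies each $\beta_{ab}$ with $\pm\omega_c^{hh}$, leading to $\Phi^{(2,2)} = \theta_3 \wedge \theta_2 \wedge \omega_1^{hh} + \theta_1 \wedge \theta_3 \wedge \omega_2^{hh} + \theta_2 \wedge \theta_1 \wedge \omega_3^{hh}$. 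The same contractions yield $\omega_i(U_i,\cdot) = -d\nu$, so that $\omega_i^{vh} = -\theta_i \wedge d\nu$ and $\omega_i = -\theta_i \wedge d\nu + \omega_i^{hh}$. The remaining sectors I would read off from self-duality $\Hodge\Phi = \Phi$, which swaps $(3,1) \leftrightarrow (0,4)$ and $(2,2) \leftrightarrow (1,3)$. At once $\Phi^{(0,4)} = \Hodge\Phi^{(3,1)} = \Hodge(d\nu \wedge \theta_3 \wedge \theta_2 \wedge \theta_1)$, matching the last term of \eqref{eq:Sp7form}.

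Identifying $\Phi^{(1,3)} = \Hodge\Phi^{(2,2)}$ with $d\nu \wedge \sum\Theta_i \wedge \omega_i$ is the main obstacle. I would use the splitting behaviour $\Hodge_Y(\alpha_V \wedge \beta_H) = \pm\Hodge_V\alpha_V \wedge \Hodge_H\beta_H$ combined with the three-dimensional identity $\Hodge_V(\theta_j \wedge \theta_k) = \operatorname{sgn}(j,k,i)\,h\,U_i^\flat$ and the relation $\Theta_i = h^2 U_i^\flat$, rewriting $\Hodge\Phi^{(2,2)}$ as $d\nu \wedge \sum\Theta_i \wedge \omega_i^{hh}$; this equals $d\nu \wedge \sum\Theta_i \wedge \omega_i$ because $d\nu \wedge \omega_i^{vh} = 0$ (since $d\nu$ appears twice). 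The key pointwise identity $\Hodge_H\omega_i^{hh} = h\,d\nu \wedge \omega_i^{hh}$ that underlies this rewriting can be verified in an adapted frame with $\Phi = \Phi_0$ and $U_1,U_2,U_3$ aligned with $e_1,e_2,e_3$, then propagated by the transitivity of the $\Spin(7)$-action on orthonormal three-frames in $\bR^8$. Finally, assembly: the group $\theta_3 \wedge \theta_2 \wedge \omega_1 + \theta_1 \wedge \theta_3 \wedge \omega_2 + \theta_2 \wedge \theta_1 \wedge \omega_3$ reproduces $\Phi^{(2,2)}$ from its $\omega_i^{hh}$ parts and contributes an excess $+3\,\theta_1 \wedge \theta_2 \wedge \theta_3 \wedge d\nu$ from the $\omega_i^{vh}$ parts, which combined with the $-2\,\theta_1 \wedge \theta_2 \wedge \theta_3 \wedge d\nu$ from $d\nu \wedge 2\theta_2 \wedge \theta_3 \wedge \theta_1$ yields the correct $\Phi^{(3,1)} = \theta_1 \wedge \theta_2 \wedge \theta_3 \wedge d\nu$; the coefficient $2$ in the proposition is precisely what makes this bookkeeping work.
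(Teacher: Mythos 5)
Your proof is correct, but it takes a genuinely different route from the paper's. The paper's argument is a single brute-force computation in an adapted pointwise coframe: it writes \( E_1=k_1U_1 \), \( E_2=k_2U_1+\ell_2U_2 \), \( E_3=k_3U_1+\ell_3U_2+m_3U_3 \), \( e_4=h\,d\nu \), expresses \( \theta_i \), \( \Theta_i \) and \( \omega_i \) in terms of \( e_1,\dots,e_8 \), and matches every term of the right-hand side of \eqref{eq:Sp7form} against \eqref{eq:Phi0}. You instead organise the identity by the vertical/horizontal bigrading and use self-duality of \( \Phi \) to halve the work: the \( (3,1) \) and \( (2,2) \) sectors come from frame-free contractions (using \( \theta_i(U_j)=\delta_{ij} \), \( U_i\hook\omega_i=-d\nu \) and \( \omega_i^{vv}=0 \)), the \( (0,4) \) and \( (1,3) \) sectors are their Hodge duals, and the only pointwise model computation left is the identity \( \Hodge_H\omega_i^{hh}=h\,d\nu\wedge\omega_i^{hh} \) --- equivalently, that each \( \omega_i^{hh} \) satisfies \( N\hook\omega_i^{hh}=0 \) and is self-dual on the four-plane \( H\cap\ker d\nu \), which is precisely the content of the paper's formulas \( k_1\ell_2\,\omega_3=-e_{34}-e_{56}-e_{78} \), etc. Your coefficient bookkeeping (\( +3 \) from the three \( \omega_i^{vh}=-\theta_i\wedge d\nu \) contributions against \( -2 \) from the explicit \( 2\,\theta_2\wedge\theta_3\wedge\theta_1 \) term) checks out and gives a clean explanation of the otherwise mysterious factor \( 2 \). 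Two points to make explicit in a full write-up: first, since the \( U_i \) are not orthonormal, after reducing to the model via transitivity of \( \Spin(7) \) on orthonormal three-frames you should add that the general \( \omega_i^{hh} \) are linear combinations of the model ones while \( h\,d\nu=N^\flat \) is invariant under orientation-preserving changes of basis of \( \Span{U_1,U_2,U_3} \), so the key identity propagates by linearity; second, the signs in \( \Hodge_Y(\alpha_V\wedge\beta_H)=\pm\Hodge_V\alpha_V\wedge\Hodge_H\beta_H \) and \( \Hodge_V(\theta_j\wedge\theta_k)=\pm h\,U_i^\flat \) must be tracked consistently so that the \( (1,3) \) sector comes out as \( +\,d\nu\wedge\sum_i\Theta_i\wedge\omega_i \) (this is also where the factor \( h^2 \) in \( \Theta_i=h^2U_i^\flat \) is accounted for, as the product of the two lower-dimensional stars). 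Neither point is a gap; both are routine to supply.
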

\begin{proof}
  Working locally at a point and using the \( T^3 \)-action we may
  write the first three standard basis elements of \( \bR^8 \) as \(
  E_1=k_1U_1 \), \( E_2= k_2U_1+\ell_2U_2 \), \( E_3=
  k_3U_1+\ell_3U_2+m_3U_3 \) for appropriate functions \(
  k_1,\ldots,m_3 \). Now, using \eqref{eq:Phi0}, we get \(
  k_1\ell_2\,\omega_3 = -e_{34}-e_{56}-e_{78} \), \(
  k_1m_3\,\omega_2-k_1\ell_3\,\omega_3=-e_{24}+e_{57}-e_{68} \) and \(
  -\ell_2m_3\,\omega_1+k_2m_3\,\omega_2+(\ell_2k_3-k_2\ell_3)\,\omega_3=e_{14}-e_{58}-e_{67}
  \). We therefore have
  \begin{gather*}
    \ell_2m_3\,\omega_1=-e_{14}+e_{58}+e_{67}-\tfrac{k_2}{k_1}(e_{24}-e_{57}+e_{68})-\tfrac{k_3}{k_1}(e_{34}+e_{56}+e_{78})\\
    k_1m_3\,\omega_2=-e_{24}+e_{57}-e_{68}-\tfrac{\ell_3}{\ell_2}(e_{34}+e_{56}+e_{78})\\
    k_1\ell_2\,\omega_3=-e_{34}-e_{56}-e_{78}.
  \end{gather*}
  Next, we write \( \theta_1 = k_1e_1+k_2e_2+k_3e_3 \), \( \theta_2 =
  \ell_2e_2+\ell_3e_3 \) and \( \theta_3=m_3e_3 \). Also note that \(
  h\,d\nu=e_4 \). We then find
  \begin{equation*}
    \begin{split}
      e_{1234}&=d\nu\wedge\theta_3\wedge\theta_2\wedge\theta_1,\quad e_{5678}=\Hodge(d\nu\wedge\theta_3\wedge\theta_2\wedge\theta_1),\\
      \theta_3\wedge\theta_2\wedge\omega_1&=e_{1234}-e_{23}(e_{58}+e_{67})-\tfrac{k_2}{k_1}e_{23}(e_{57}-e_{68})+\tfrac{k_3}{k_1}e_{23}(e_{56}+e_{78}),\\
      \theta_1\wedge\theta_3\wedge\omega_2&=e_{1234}+e_{13}(e_{57}-e_{68})-\tfrac{\ell_3}{\ell_2}e_{13}(e_{56}+e_{78})\eqbreak+\tfrac{k_2}{k_1}e_{23}(e_{57}-e_{68})-\tfrac{k_2\ell_3}{k_1\ell_2}e_{23}(e_{56}+e_{78}),\\
      \theta_2\wedge\theta_1\wedge\omega_3&=e_{1234}+e_{12}(e_{56}+e_{78})-\tfrac{k_3}{k_1}e_{23}(e_{56}+e_{78})\eqbreak+\tfrac{k_2\ell_3}{k_1\ell_2}e_{23}(e_{56}+e_{78})+\tfrac{\ell_3}{\ell_2}e_{13}(e_{56}+e_{78}),\\
      d\nu\wedge(\Theta_1\wedge\omega_1&+\Theta_2\wedge\omega_2+\Theta_3\wedge\omega_3)=-e_{14}(e_{58}+e_{67})-e_{24}(e_{57}-e_{68})\eqbreak\qquad\qquad\qquad\qquad+e_{34}(e_{56}+e_{78}),
    \end{split}
  \end{equation*}
  and the given expression for \( \Phi \) follows.
\end{proof}

\begin{remark}
  The functions \( k_1,\ldots,m_3 \) from the proof of Proposition
  \ref{prop:Phi} are related to \( G \) in the following way
  \begin{equation*}
    G=\left(\begin{smallmatrix} \tfrac1{k_1^2} & -\tfrac{k_2}{k_1^2\ell_2} & \tfrac{k_2\ell_3-k_3\ell_2}{k_1^2\ell_2m_3}\\
        -\tfrac{k_2}{k_1^2\ell_2} & \tfrac{k_2^2}{k_1^2\ell_2^2}+\tfrac{1}{\ell_2^2} & \tfrac{k_2(k_3\ell_2-k_2\ell_3)}{k_1^2\ell_2^2m_3}-\tfrac{\ell_3}{\ell_2^2m_3}\\
        \tfrac{k_2\ell_3-k_3\ell_2}{k_1^2\ell_2m_3} & \tfrac{k_2(k_3\ell_2-k_2\ell_3)}{k_1^2\ell_2^2m_3}-\tfrac{\ell_3}{\ell_2^2m_3} & \tfrac{(k_2\ell_3-k_3\ell_2)^2}{(k_1\ell_2m_3)^2}+\tfrac{\ell_3^2}{\ell_2^2m_3^2}+\tfrac1{m_3^2}   
      \end{smallmatrix}\right),
  \end{equation*}
  and for \( G^{-1}=(g^{ij}) \) we have
  \begin{equation}
    \label{eq:Ginv}
    G^{-1} = \left(\begin{smallmatrix}k_1^2+k_2^2+k_3^2 & k_2\ell_2+k_3\ell_3 & k_3m_3\\
        k_2\ell_2+k_3\ell_3 & \ell_2^2+\ell_3^2 & \ell_3m_3\\
        k_3m_3 & \ell_3m_3 & m_3^2
      \end{smallmatrix}\right).
  \end{equation}
\end{remark}

Now suppose that \( t\in\nu(Y_0) \) is a regular value for \(
\nu\colon\,Y_0\to\bR \). Then \( \mathcal X_t=\nu^{-1}(t) \) is a
real-analytic hypersurface and has unit normal \( N=h(d\nu)^\sharp \).
We shall denote by \( \iota \) the inclusion \( \mathcal
X_t\hookrightarrow Y_0 \).

\begin{definition}
  The \emph{\( T^3 \) reduction} of \( Y_0 \) at level \( t \) is the
  four-manifold
  \begin{equation*}
    M=\nu^{-1}(t)/{T^3}=\mathcal X_t/{T^3}.
  \end{equation*}
\end{definition}
This quotient space is a tri-symplectic manifold.
\begin{proposition}
  \label{prop:sympltripl}
  The \( T^3 \) reduction \( M \) carries three pointwise linearly
  independent symplectic forms defining the same orientation.
\end{proposition}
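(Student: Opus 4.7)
The plan is to check four things, in order: (i) each $\omega_i$ descends from $Y_0$ to a 2-form on $M$; (ii) the descended forms are closed; (iii) they are pointwise linearly independent; and (iv) they induce the same orientation (from which non-degeneracy is automatic in dimension four).

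For (i), descent amounts to showing that $\iota^*\omega_i$ is $T^3$-basic. Invariance $\Ld_{U_k}\omega_i = 0$ follows directly from $\Ld_{U_k}\Phi = 0$ and $[U_j, U_k] = 0$. For horizontality one computes $U_k \hook \omega_i$: whenever $k$ appears as a contraction index in the definition of $\omega_i$ the expression vanishes by anti-symmetry, while the remaining cases give
\begin{equation*}
  U_i \hook \omega_i \;=\; -\,U_1 \hook U_2 \hook U_3 \hook \Phi \;=\; -d\nu,
\end{equation*}
which restricts to zero on $\mathcal X_t = \nu^{-1}(t)$. Thus $U_k \hook \iota^*\omega_i = 0$ for all $i,k$, so each $\iota^*\omega_i$ is basic and descends to $M$. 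Part (ii) is two applications of Cartan's formula: $d(U_3\hook\Phi) = \Ld_{U_3}\Phi - U_3 \hook d\Phi = 0$, followed by
\begin{equation*}
  d\omega_1 \;=\; d(U_2\hook U_3\hook\Phi) \;=\; \Ld_{U_2}(U_3\hook\Phi) - U_2\hook d(U_3\hook\Phi) \;=\; 0,
\end{equation*}
where $\Ld_{U_2}(U_3\hook\Phi) = [U_2,U_3]\hook\Phi + U_3\hook\Ld_{U_2}\Phi = 0$; analogously for $\omega_2,\omega_3$.

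The substantive part is (iii)--(iv), which I would attack by reusing the orthonormal frame $e_1,\dots,e_8$ built in the proof of Proposition \ref{prop:Phi}. There $e_1,e_2,e_3$ span the vertical $T^3$-directions while $e_4 = h\,d\nu$ is the unit co-normal to $\mathcal X_t$, so pulling a horizontal basic form back to $M$ amounts to deleting every term involving $e_1,\dots,e_4$. Inspecting the explicit formulas in that proof, each descended $\omega_i$ is a combination of the self-dual two-forms
\begin{equation*}
  \sigma_1 = e_{56}+e_{78},\quad \sigma_2 = e_{57}-e_{68},\quad \sigma_3 = e_{58}+e_{67}.
\end{equation*}
Ordering the rows as $(\omega_3,\omega_2,\omega_1)$, the $3\times 3$ coefficient matrix in the basis $(\sigma_1,\sigma_2,\sigma_3)$ comes out lower triangular with diagonal $\bigl(-1/(k_1\ell_2),\,1/(k_1 m_3),\,1/(\ell_2 m_3)\bigr)$; these entries are nonzero wherever $T^3$ acts locally freely, since the factorisation $G^{-1}=A A^T$ with $A$ upper triangular of diagonal $(k_1,\ell_2,m_3)$ gives $k_1\ell_2 m_3 = \sqrt{\det G^{-1}} = h > 0$. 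Hence the three forms are pointwise linearly independent on $M$. Finally, $\sigma_i\wedge\sigma_j = 2\delta_{ij}\,e_{5678}$ yields $\omega_i\wedge\omega_i$ as a strictly positive multiple of $e_{5678}$, delivering non-degeneracy and the common-orientation statement at once.

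The step that requires the most care is the self-dual identification of the $\omega_i$ via the formulas of Proposition \ref{prop:Phi}: one has to verify that ``dropping the $e_1,\dots,e_4$-terms'' really produces the pullback of the basic form to $M$, which rests on $\iota^* e_4 = 0$ along $\mathcal X_t$ together with $e_1,e_2,e_3$ being dual to the vertical vector fields $U_1,U_2,U_3$. Once this basic identification is in place, the remaining content reduces to the linear algebra above.
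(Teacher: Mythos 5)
Your proposal is correct and follows essentially the same route as the paper: descend the $\omega_i$ using invariance and the identity $U_i\hook\omega_i=\pm d\nu$, prove closedness by Cartan's formula, and then read off non-degeneracy and the common orientation from the explicit self-dual expressions in the frame of Proposition~\ref{prop:Phi}. Your triangular-matrix bookkeeping and the determinant identity $\abs{k_1\ell_2 m_3}=h$ just make explicit the linear independence that the paper leaves to inspection (note only that you recycle the symbols $\sigma_i$ for the self-dual basis, whereas the paper reserves them for the descended forms).
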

\begin{proof}
  Consider the real-analytic two-forms \( \omega_1 \), \( \omega_2 \) and \(
  \omega_3 \) on \( Y_0 \). These forms are \( T^3 \)-invariant and
  closed since for instance \( \mathcal L_{U_i}\omega_1=\mathcal
  L_{U_i}(U_2\hook U_3\hook\Phi)=0 \) and \( d\omega_1=d(U_2 \hook
  U_3\hook \Phi)=\mathcal L_{U_2}(U_3\hook\Phi)=0 \), respectively.
  Furthermore, as \( U_1\hook\omega_1=-d\nu \), etc., their pull-backs
  to \( \mathcal X_t = \nu^{-1}(t) \) are basic. Thus they descend to
  three closed forms \( \sigma_1 \), \( \sigma_2 \) and \( \sigma_3 \)
  on \( M \).

  The proof of Proposition \ref{prop:Phi} shows that at a point \(
  k_1\ell_2m_3\,\sigma_1=k_1(e_{58}+e_{67})+k_2(e_{57}-e_{68})-k_3(e_{56}+e_{78})
  \), \(
  k_1\ell_2m_3\,\sigma_2=\ell_2(e_{57}-e_{68})-\ell_3(e_{56}+e_{78})
  \) and \( k_1\ell_2m_3\sigma_3=-m_3(e_{56}+e_{78}) \). Consequently,
  \( \sigma_1 \), \( \sigma_2 \) and \( \sigma_3 \) are non-degenerate
  symplectic forms defining the same orientation.
\end{proof}

The symplectic triple \( (\sigma_1,\sigma_2,\sigma_3) \) on \( M \)
defines a matrix \( Q=(q_{ij}) \) given by \(
\sigma_i\wedge\sigma_j=2q_{ij}\vol_M \), where \( \vol_M \) is the
induced volume form on \( M \).

\begin{proposition}
  The matrices \( G \) and \( Q \) are related via \( G^{-1}=h^2Q \).
  In particular, \(
  \vol_M=\tfrac{h^2}6\sum_{i,j=1}^3g_{ij}\sigma_i\wedge\sigma_j \).
  Moreover, for any positive smooth function \( \lambda \) on \( M \),
  the redefinitions \( \widetilde{Q}=\lambda^2Q \), \(
  \widetilde{G}=\lambda G \), \(
  \widetilde{h}^2=\det(\widetilde{G}^{-1}) \) retain the relation \(
  \widetilde{G}^{-1}=\widetilde{h}^2\widetilde{Q} \).
\end{proposition}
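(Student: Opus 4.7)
My plan is to work in the local frame introduced in the proof of Proposition~\ref{prop:Phi} and to exploit the particular algebraic structure of the three horizontal two-forms
\begin{equation*}
  A = e_{58}+e_{67}, \quad B = e_{57}-e_{68}, \quad C = e_{56}+e_{78}.
\end{equation*}
A direct check shows $A\wedge A = B\wedge B = C\wedge C = 2\,e_{5678}$ and that all three pairwise products vanish. Moreover, since $e_5,\dots,e_8$ are orthonormal and span a horizontal frame transverse to the $T^3$-orbits and to the unit normal $N$, the induced volume form on $M$ is forced to be $\vol_M = e_{5678}$.

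From the formulas in the proof of Proposition~\ref{prop:sympltripl}, each $k_1\ell_2m_3\,\sigma_i$ is a linear combination of $A,B,C$, and the coefficient matrix is the upper triangular
\begin{equation*}
  V = \begin{pmatrix} k_1 & k_2 & -k_3 \\ 0 & \ell_2 & -\ell_3 \\ 0 & 0 & -m_3 \end{pmatrix}.
\end{equation*}
The algebra just observed collapses $\sigma_i\wedge\sigma_j$ to $2(VV^T)_{ij}\,e_{5678}/(k_1\ell_2m_3)^2$. Comparison with \eqref{eq:Ginv} then yields $VV^T = G^{-1}$, so $\det(G^{-1}) = (\det V)^2 = (k_1\ell_2m_3)^2$, i.e.\ $h = k_1\ell_2m_3$, and the first claim $Q = h^{-2}G^{-1}$ drops out at once.

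For the volume formula I would contract $\sigma_i\wedge\sigma_j = 2q_{ij}\vol_M$ with $g_{ij}$ and sum: this gives $\sum_{i,j} g_{ij}\sigma_i\wedge\sigma_j = 2\,\Tr(GQ)\vol_M$, and the relation just established gives $GQ = h^{-2}I_3$, whence $\Tr(GQ) = 3h^{-2}$, producing the asserted identity. The rescaling statement is then immediate: $\widetilde G^{-1} = \lambda^{-1}G^{-1}$ and, since the matrices are $3\times 3$, $\widetilde h^2 = \det\widetilde G^{-1} = \lambda^{-3}h^2$, so $\widetilde h^2\widetilde Q = \lambda^{-3}h^2\cdot\lambda^2 Q = \lambda^{-1}G^{-1} = \widetilde G^{-1}$.

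The only non-routine point is spotting that the matrix in \eqref{eq:Ginv} is precisely the Gram matrix $VV^T$ of the triangular factor $V$ read off directly from the proof of Proposition~\ref{prop:sympltripl}; this Cholesky-type identity is what reduces everything to a short verification, rather than an attempt to invert the unwieldy $G$ or to manipulate $h=\sqrt{\det G^{-1}}$ by hand.
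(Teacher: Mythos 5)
Your proof is correct and follows essentially the same route as the paper: both work in the local frame of Proposition~\ref{prop:Phi}, compute the wedge products $\sigma_i\wedge\sigma_j$ from the expressions in the proof of Proposition~\ref{prop:sympltripl}, and compare with \eqref{eq:Ginv}. Your packaging of that computation as the Gram identity $VV^T=G^{-1}$ for the triangular coefficient matrix $V$ (using that $e_{58}+e_{67}$, $e_{57}-e_{68}$, $e_{56}+e_{78}$ square to $2e_{5678}$ and pairwise wedge to zero) is just a tidier way of organising the same verification, and also gives $h^2=(\det V)^2$ for free.
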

\begin{proof}
  Working locally at a point and using the \( T^3 \)-action, as in the
  proof of Proposition \ref{prop:Phi}, we have
  \begin{gather*}
    \sigma_1\wedge\sigma_2=2\tfrac{k_2\ell_2+k_3\ell_3}{h^2}\vol_M,\quad \sigma_1\wedge\sigma_3=2\tfrac{k_3m_3}{h^2}\vol_M,\quad\sigma_2\wedge\sigma_3=2\tfrac{\ell_3m_3}{h^2}\vol_M,\\
    \tfrac{h^2}{(k_1^2+k_2^2+k_3^2)}\sigma_1^2=\tfrac{h^2}{(\ell_2^2+\ell_3^2)}\sigma_2^2=\tfrac{h^2}{m_3^2}\sigma_3^2=2\vol.
  \end{gather*}
  where \( \vol_M=e_{5678} \) is induced volume form on \( M \). The
  relation between \( Q \) and \( G^{-1} \) now follows directly from
  the expression \eqref{eq:Ginv}, and it immediately implies the last
  two assertions of the proposition.
\end{proof}
As we shall see below, the above behaviour of \( G \) and \( Q \) with
respect to rescaling plays a subtle role in the description of induced
geometry on the hypersurface \( \mathcal X_t \).

It is well-known, cf. \cite{Cabrera:hypersurfaces}, that any
orientable hypersurface in a \( \Spin(7) \)-manifold carries an
induced \( G_2 \)-structure. To express the \( G_2 \)-structure \(
\phi=N\hook\Phi \) on \( \mathcal X_t \) it is useful to rewrite \(
\Phi \) in a way that abuses notation slightly, namely using the forms
defined on \( M \).
\begin{equation}
  \label{eq:altSp7form}
  \begin{split}
    \Phi&=d\nu\wedge\left(\theta_3\wedge\theta_2\wedge\theta_1+\Theta_1\wedge\sigma_1+\Theta_2\wedge\sigma_2+\Theta_3\wedge\sigma_3\right)\eqbreak
    +\theta_3\wedge\theta_2\wedge\sigma_1+\theta_1\wedge\theta_3\wedge\sigma_2+\theta_2\wedge\theta_1\wedge\sigma_3
    + \vol_M.
  \end{split}
\end{equation}
From \eqref{eq:altSp7form} we see that
\begin{equation}
  \label{eq:G23form}
  h\phi=\theta_3\wedge\theta_2\wedge\theta_1+\Theta_1\wedge\sigma_1+\Theta_2\wedge\sigma_2+\Theta_3\wedge\sigma_3.
\end{equation}
Alternatively we may, up to orientation, specify the \( G_2
\)-structure by the four-form \( \psi=\iota^*\Phi\, (=\Hodge\phi) \):
\begin{equation*}
  \psi=\theta_3\wedge\theta_2\wedge\sigma_1+\theta_1\wedge\theta_3\wedge\sigma_2+\theta_2\wedge\theta_1\wedge\sigma_3 +\vol_M.
\end{equation*}
As the \( \Spin(7) \)-structure is torsion-free, the induced
real-analytic \( G_2 \)-structure on \( \mathcal X_t \) is
\emph{cosymplectic}, meaning \( d\psi=0 \).

It turns out that there is a family of smooth cosymplectic \( G_2
\)-structures on \( \mathcal X_t \) obtained by scaling of the volume
form on \( M \):

\begin{proposition}
  \label{prop:coG2fam}
  Let \( (\phi,\psi) \) be the \( G_2 \)-structure on \( \mathcal X_t
  \) described above. For any positive smooth function \( \lambda \)
  on \( M \), the changes \( \lambda^2 Q=:\widetilde Q \) and \(
  \lambda G=:\widetilde G \) of \( Q \) and \( G \), respectively,
  give a new cosymplectic \( G_2 \)-structure \(
  (\widetilde{\phi},\widetilde{\psi}) \) on \( \mathcal X_t \):
  \begin{gather}
    \widetilde{h}\widetilde\phi=\theta_3\wedge\theta_2\wedge\theta_1+\widetilde{\Theta}_1\wedge\sigma_1+\widetilde{\Theta}_2\wedge\sigma_2+\widetilde{\Theta}_3\wedge\sigma_3,\\
    \widetilde\psi=\theta_3\wedge\theta_2\wedge\sigma_1+\theta_1\wedge\theta_3\wedge\sigma_2+\theta_2\wedge\theta_1\wedge\sigma_3
    +\widetilde{\vol}_M,
  \end{gather}
  where \( \widetilde{h}=\det(\widetilde
  Q)^{-\tfrac14}=\lambda^{-\tfrac32}h \), \(
  \widetilde{\Theta}_i=\sum_{j=1}^3\widetilde{q}^{ij}\theta_j=\lambda^{-2}\Theta_i
  \), \(
  \widetilde{\vol}_M\allowbreak=\tfrac16\sum_{i,j=1}^3\allowbreak\tilde{q}^{ij}\sigma_i\wedge\sigma_j\allowbreak=\lambda^{-2}\vol_M
  \).
\end{proposition}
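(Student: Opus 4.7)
The plan is to proceed in three stages: first establish the explicit scaling relations for $\widetilde{h}$, $\widetilde{\Theta}_i$ and $\widetilde{\vol}_M$; then show that the prescribed forms pointwise constitute a $G_2$-structure by invoking the pointwise model from Proposition~\ref{prop:Phi} with the rescaled data; and finally deduce cosymplecticity by direct comparison of $\widetilde{\psi}$ with $\psi$. For the scaling laws, I would begin from $\widetilde{G} = \lambda G$ and $\widetilde{Q} = \lambda^2 Q$, which give $\widetilde{G}^{-1} = \lambda^{-1}G^{-1}$ and $\widetilde{Q}^{-1} = \lambda^{-2}Q^{-1}$. Taking determinants of the $3\times 3$ matrix $\widetilde{G}^{-1}$ yields $\widetilde{h}^2 = \det(\widetilde{G}^{-1}) = \lambda^{-3}h^2$, hence $\widetilde{h} = \lambda^{-3/2}h$; the equivalent expression $\widetilde{h} = \det(\widetilde{Q})^{-1/4}$ is a determinantal restatement of the preserved relation $\widetilde{G}^{-1} = \widetilde{h}^2\widetilde{Q}$. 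This same relation, read as $Q^{-1} = h^2 G$, lets one rewrite $\Theta_i = h^2\sum_j g_{ij}\theta_j$ in the compact form $\Theta_i = \sum_j q^{ij}\theta_j$, and the tilded analogue immediately gives $\widetilde{\Theta}_i = \lambda^{-2}\Theta_i$. Pairing $\sigma_i\wedge\sigma_j = 2q_{ij}\vol_M$ with $q^{ij}$ and using $\sum_{i,j}q^{ij}q_{ij} = 3$ produces $\vol_M = \tfrac{1}{6}\sum_{i,j}q^{ij}\sigma_i\wedge\sigma_j$, whose tilded counterpart then yields $\widetilde{\vol}_M = \lambda^{-2}\vol_M$.

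For the $G_2$ claim, the key observation is that the pointwise construction behind Proposition~\ref{prop:Phi} depends only on the two algebraic relations $h^2 = \det(G^{-1})$ and $G^{-1} = h^2 Q$, together with the tri-symplectic triple $(\sigma_i)$ and the vertical data $(\theta_i, d\nu)$. Since both relations are preserved under the rescaling, substituting the tilded quantities into the right-hand side of~\eqref{eq:altSp7form} defines a 4-form $\widetilde{\Phi}$ on $Y_0$ which, by the verbatim pointwise computation of Proposition~\ref{prop:Phi}, is pointwise a $\Spin(7)$-form. The induced reduction — contraction with the unit normal associated to $\widetilde{\Phi}$ followed by pull-back to $\mathcal{X}_t$ — then produces precisely the stated formulas for $\widetilde{\phi}$ and $\widetilde{\psi}$.

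For cosymplecticity, observe that $\widetilde{\psi} - \psi = \widetilde{\vol}_M - \vol_M = (\lambda^{-2} - 1)\vol_M$, which is the pull-back via $\mathcal{X}_t\to M$ of a top-degree form on the four-manifold $M$ and is therefore automatically closed; combined with $d\psi = 0$, which holds since $\Phi$ is torsion-free, this gives $d\widetilde{\psi} = 0$. The main obstacle lies squarely in the middle stage: one must justify with care that~\eqref{eq:altSp7form} genuinely encodes a $\Spin(7)$-form whenever the two consistency relations hold, with no hidden reliance on the ambient $\Spin(7)$-structure of $Y$. Once this point is granted, the rescaled structure falls out essentially by substitution.
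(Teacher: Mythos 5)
Your first and third stages are fine: the scaling identities $\widetilde h=\lambda^{-3/2}h$, $\widetilde\Theta_i=\lambda^{-2}\Theta_i$, $\widetilde{\vol}_M=\lambda^{-2}\vol_M$ follow exactly as you say from $\widetilde G^{-1}=\lambda^{-1}G^{-1}$, $\widetilde Q^{-1}=\lambda^{-2}Q^{-1}$ and $\Tr(Q^{-1}Q)=3$; and your closedness argument for $\widetilde\psi$ (namely that $\widetilde\psi-\psi=(\lambda^{-2}-1)\vol_M$ is the pull-back of a top-degree form on the four-manifold $M$, hence closed) is precisely what justifies the paper's ``clearly, $\widetilde\psi$ is closed''.

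The problem is your middle stage, which you yourself flag as ``the main obstacle'' and then do not resolve: that the rescaled forms are pointwise linearly equivalent to the standard $G_2$ forms is the entire content of the proposition, and it cannot be obtained by ``the verbatim pointwise computation of Proposition \ref{prop:Phi}'', because that computation runs in the wrong direction — it starts from a form already known to be a $\Spin(7)$-form and derives the normal form \eqref{eq:altSp7form}, rather than certifying that every expression of that shape with consistent data $(h^{-4}=\det Q,\ G^{-1}=h^2Q)$ is a $\Spin(7)$-form. (Indeed, in the paper the converse statement for general data is Proposition \ref{prop:cosympl}, whose proof \emph{uses} the present proposition, so appealing to it here would be circular. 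Your detour through an ambient four-form $\widetilde\Phi$ also quietly requires rescaling the normal direction $d\nu$, which is extraneous to the statement.) The missing step is filled by a short explicit coframe computation: starting from the adapted basis $\base{e_1,\dots,\widehat{e_4},\dots,e_8}$ of $T^*\mathcal X_t$ from Proposition \ref{prop:Phi}, set $f_i=\sqrt{\lambda}\,e_i$ for $i=1,2,3$ and $f_i=\lambda^{-1/2}e_i$ for $i=5,\dots,8$; then one checks directly that
\begin{equation*}
  \widetilde{\phi}=-f_{123}-f_{3}\wedge(f_{56}+f_{78})+f_2\wedge(f_{57}-f_{68})+f_1\wedge(f_{58}+f_{67}),
\end{equation*}
and similarly that $\widetilde\psi$ takes the standard dual shape in the $f$-basis, so that $\base{f_1,\dots,\widehat{f_4},\dots,f_8}$ is a $G_2$-coframe for $(\widetilde\phi,\widetilde\psi)$. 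Without exhibiting such a coframe (or an equivalent pointwise verification), the proof is incomplete at exactly its central claim.
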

\begin{proof}
  Working locally at a point, as in the proof of Proposition
  \ref{prop:Phi}, we have the basis \(
  \base{e_1,\ldots,\widehat{e_4},\ldots,e_8} \) for \( T^*\mathcal X_t
  \). We now define a new basis \(
  \base{f_1,\ldots,\widehat{f_4},\ldots,f_8} \) for \( T^*\mathcal X_t
  \) by letting \( f_i:=\sqrt{\lambda}e_i \), for \( i=1,2,3 \), and
  \( f_i:=\tfrac1{\sqrt{\lambda}}e_i \), for \( i=5,\ldots,8 \).
  Writing \( \widetilde\phi \) and \( \widetilde{\psi} \) in terms of
  \( f_i \) we have that
  \begin{gather*}
    \widetilde{\phi}=-f_{123}-f_{3}(f_{56}+f_{78})+f_2(f_{57}-f_{68})+f_1(f_{58}+f_{67}),\\
    \widetilde{\psi}=f_{12}(f_{56}+f_{78})+f_{13}(f_{57}-f_{68})-f_{23}(f_{58}+f_{67})+f_{5678},
  \end{gather*}
  which shows that \( \widetilde\phi \) and \( \widetilde{\psi} \)
  define a \( G_2 \)-structure with volume form \(
  \widetilde{\vol}_{\mathcal X}=\tfrac1{\sqrt{\lambda}}\vol_{\mathcal
    X} \). Clearly, \( \widetilde\psi \) is closed. Hence the new \(
  G_2 \)-structure is also cosymplectic.
\end{proof}

\section{Inversion via a flow}
\label{sec:Sp7inv}

We now consider how the reduction procedure from the previous section
may be inverted, constructing a \( \Spin(7) \)-metric starting from a
triple of symplectic forms on a four-manifold \( M \). First we need a
weakening of the notion of coherent symplectic triple \cite[Definition
6.4]{Madsen-S:multi-moment}.

\begin{definition}
  A \emph{weakly coherent symplectic triple \( \CST \)} on a
  four-manifold \( M \) consists of three symplectic forms \( \sigma_1
  \), \( \sigma_2 \), \( \sigma_3 \) that pointwise span a maximal
  positive subspace of \( \Lambda^2T^*M \).
\end{definition}

As in \cite{Donaldson-K:4}, the positive three-dimensional subbundle
\( \Lambda^+=\Span{\sigma_1,\sigma_2,\sigma_3}\subset\Lambda^2T^*M \)
corresponds to a unique oriented conformal structure on \( M \). Fix a
volume form \( \vol_M \) on \( M \) compatible with the orientation
and define a \( 3\times 3\)-matrix \( Q=(q_{ij}) \) by \(
\sigma_i\wedge\sigma_j=2q_{ij}\vol_M \), for \( i,j=1,2,3 \).
Subsequently, denote by \( h \) the positive smooth function
satisfying \( h^{-4}=\det(Q) \). We now consider a \( T^3 \)-bundle \(
\pi_M\colon\,\mathcal X \to M \) endowed with connection one-form \(
\theta=(\theta_1,\theta_2,\theta_3)\in\Omega^1(\mathcal X,\bR^3) \).
We define three one-forms \( \Theta_i \), for \( i=1,2,3 \), by the
formula \( \Theta_i=\sum_{j=1}^3q^{ij}\theta_j \). Finally, denote the
curvature by \( F=\pi^*_M(d\theta)\in\Omega^2(M,\bR^3) \). With these
definitions in mind we have:

\begin{proposition}
  \label{prop:cosympl}
  Let \( (M,\CST) \) be a weakly coherent tri-symplectic
  four-manifold. Suppose that \( \mathcal X \) is a principal \( T^3
  \)-bundle over \( M \) with connection one-form \(
  \theta=(\theta_1,\theta_2,\theta_3) \) and curvature \( F \). Define
  a three-form \( \phi \) and a four-form \( \psi \) by
  \begin{equation}
    \label{eq:cosymplform}
    \begin{split}
      h\phi&=\theta_3\wedge\theta_2\wedge\theta_1+\Theta_1\wedge\sigma_1+\Theta_2\wedge\sigma_2+\Theta_3\wedge\sigma_3,\\
      \psi&=\theta_3\wedge\theta_2\wedge\sigma_1+\theta_1\wedge\theta_3\wedge\sigma_2+\theta_2\wedge\theta_1\wedge\sigma_3
      +\vol_M.
    \end{split}
  \end{equation}
  Then \( \phi \) determines a \( G_2 \)-structure on \( \mathcal X \)
  satisfying \( \Hodge\phi=\psi \).

  Let \( A=(a_{ij}) \) be the \( 3\times 3 \)-matrix defined pointwise
  by the projection \( F^+=(\sigma_1,\sigma_2,\sigma_3)A \). Then the
  \( G_2 \)-structure \( \phi \) is cosymplectic if and only if the
  matrix \( QA \) is symmetric:
  \begin{equation}
    \label{eq:curvcond}
    QA = A^tQ
  \end{equation}
\end{proposition}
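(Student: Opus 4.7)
The proposition asserts two things: that \( (\phi,\psi) \) defined by~\eqref{eq:cosymplform} is a \( G_2 \)-structure on \( \mathcal X \) with \( \Hodge\phi = \psi \), and that the cosymplectic condition \( d\psi = 0 \) is equivalent to \( QA = A^t Q \). My plan is to settle the first claim by a pointwise reduction to the standard \( G_2 \)-model on \( \bR^7 \), and to derive the second from a direct differentiation of \( \psi \) combined with the self-duality of the \( \sigma_k \).

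For the \( G_2 \)-part I would work pointwise. Fix \( p \in \mathcal X \) with \( q = \pi_M(p) \). The weakly coherent hypothesis ensures that \( \sigma_1,\sigma_2,\sigma_3 \) span a three-dimensional positive subspace of \( \Lambda^2 T^*_q M \), so by an LU-type argument I can choose an oriented coframe \( e_5,e_6,e_7,e_8 \) of \( T^*_q M \) together with scalar factors \( k_i,\ell_j,m_k \) so that the \( \sigma_i \) assume the exact lower-triangular form found in the proof of Proposition~\ref{prop:sympltripl}, with \( \vol_M = e_{5678} \). Horizontally lifting these and adjoining \( \theta_1,\theta_2,\theta_3 \) produces a coframe of \( T_p^*\mathcal X \). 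Substituting the triangular expressions for \( \sigma_i \) together with \( \Theta_i = \sum_j q^{ij}\theta_j \) into~\eqref{eq:cosymplform} reproduces verbatim the algebraic manipulations of the proofs of Propositions~\ref{prop:Phi} and~\ref{prop:coG2fam}: \( h\phi \) becomes a standard \( G_2 \)-three-form and \( \psi \) its Hodge dual on \( \bR^7 \). Both assertions of the first claim follow at once.

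For the cosymplectic condition I would differentiate \( \psi \) directly. Since the \( \sigma_k \) and \( \vol_M \) are closed basic forms and \( d\theta_i = \pi_M^* F_i \), a careful sign count yields
\begin{equation*}
  d\psi = \theta_2\wedge F_3\wedge\sigma_1 - \theta_3\wedge F_2\wedge\sigma_1 + \theta_3\wedge F_1\wedge\sigma_2 - \theta_1\wedge F_3\wedge\sigma_2 + \theta_1\wedge F_2\wedge\sigma_3 - \theta_2\wedge F_1\wedge\sigma_3.
\end{equation*}
The crux is then the identity \( F_j\wedge\sigma_k = 2(A^t Q)_{jk}\,\vol_M \): because \( \sigma_k \) is self-dual with respect to the induced conformal structure on \( M \), only the self-dual part \( F_j^+ = \sum_l a_{lj}\sigma_l \) contributes, and the defining relation \( \sigma_l\wedge\sigma_k = 2q_{lk}\vol_M \) of \( Q \) finishes the computation. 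Plugging back and using linear independence of the five-forms \( \vol_M\wedge\theta_i \), the condition \( d\psi = 0 \) becomes the symmetry of \( A^t Q \); since \( Q \) is symmetric, \( (A^t Q)^t = QA \), which is equivalent to the stated \( QA = A^t Q \). The most error-prone part of the argument is the sign accounting in the differentiation; once the self-dual identity is in hand, the matrix condition drops out immediately.
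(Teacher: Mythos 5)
Your proposal is correct and follows essentially the same route as the paper: a pointwise normal form (triangular $\sigma_i$, adapted coframe $e_1,\dots,\widehat{e_4},\dots,e_8$) reducing the first claim to the computations of Propositions \ref{prop:Phi} and \ref{prop:coG2fam}, and then a reduction of $d\psi=0$ to the three conditions $F_i\wedge\sigma_j-F_j\wedge\sigma_i=0$. Your identity $F_j\wedge\sigma_k=2(A^tQ)_{jk}\vol_M$, exploiting that the anti-self-dual part of $F_j$ wedges to zero against the self-dual $\sigma_k$, is just a cleaner packaging of the paper's explicit three component equations, and the signs in your expansion of $d\psi$ check out.
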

\begin{proof}
  Write the entries of \( G^{-1}:=h^2Q \) as in \eqref{eq:Ginv} and
  then express the functions \( k_1,\ldots,m_3 \) in terms of the
  entries \( g^{ij} \) of \( G^{-1}=h^2Q \). Next, choose a conformal
  basis \( e_5,e_6,e_7,e_8 \) of \( T^*M \) so that \( h\sigma_i \)
  are as in the proof of Proposition \ref{prop:Phi} and then write \(
  \theta_1=k_1e_1+k_2e_2+k_3e_3 \), \( \theta_2=\ell_2e_2+\ell_3e_3
  \), \( \theta_3=m_3e_3 \). It now follows, using Proposition
  \ref{prop:coG2fam}, that the basis \(
  \base{e_1,\ldots,\widehat{e_4},\ldots,e_8} \) is a \( G_2 \)-basis
  for \( T^*\mathcal X \) with defining form \( \phi \) given via
  \eqref{eq:cosymplform}.

  For the final assertion we need to study the condition \( d\psi=0
  \). The equation \( d\psi=0 \) holds if and only if one has
  \begin{equation*}
    d\theta_1\wedge\sigma_2-d\theta_2\wedge\sigma_1=d\theta_3\wedge\sigma_1-d\theta_1\wedge\sigma_3=d\theta_2\wedge\sigma_3-d\theta_3\wedge\sigma_2=0.
  \end{equation*}
  A calculation shows that these relations correspond to the three
  equations
  \begin{equation}
    \begin{split}
      -a_{13}q_{12}+a_{12}q_{13}-a_{23}q_{22}+(a_{22}-a_{33})q_{23}+a_{32}q_{33}&=0,\\
      a_{13}q_{11}+a_{23}q_{12}+(a_{33}-a_{11})q_{13}-a_{21}q_{23}-a_{31}q_{33}&=0,\\
      -a_{12}q_{11}+(a_{11}-a_{22})q_{12}-a_{32}q_{13}+a_{21}q_{22}+a_{31}q_{23}&=0,
    \end{split}
  \end{equation}
  and these are equivalent to the condition \eqref{eq:curvcond}.
\end{proof}

\begin{remark}
  Condition \eqref{eq:curvcond} on \( F \) is independent of the
  choice of orientation compatible volume form on \( M \). Though the
  bilinear form on \( \Lambda^2T^*M \), given by wedging, is only
  well-defined after choosing a representative volume form,
  self-adjointness of the projection \( F^+ \in \Lambda^+\subset
  \Lambda^2T^*M \) does not depend on the specific choice.
  
  Provided the assumptions of Proposition \ref{prop:cosympl} hold, we
  therefore obtain a family of cosymplectic \( G_2 \)-manifolds. This
  is a consequence of Proposition \ref{prop:coG2fam}, and contrasts
  with the corresponding analysis of \( \SU(3) \)-structures on \( T^2
  \)-bundles over coherently tri-symplectic four-manifolds
  \cite[Proposition 6.5]{Madsen-S:multi-moment}. In that situation we
  made a particular choice of volume form to obtain a half-flat
  structure.
\end{remark}
\begin{remark}
  Existence of three-torus bundles over a weakly coherent
  tri-symplectic four-manifold \( (M,\CST) \) is related to Chern-Weil
  theory. One finds that for any closed two-form \( F \) with integral
  periods, \( F\in\Omega^2_{\bZ}(M,\bR^3) \), there exists a \( T^3
  \)-bundle \( \pi_M\colon\,\mathcal X\to M \) with connection
  one-form \( \theta \) that satisfies \( \pi_M^*(d\theta)=F \).
\end{remark}

Studying a certain Hamiltonian flow, Hitchin \cite{Hitchin:forms}
developed a relationship between torsion-free \( \Spin(7) \)-metrics
and cosymplectic \( G_2 \)-manifolds. In particular, he derived
evolution equations that describe the one\bdash{}dimensional flow of a
cosymplectic \( G_2 \)-manifold along its unit normal in a
torsion-free \( \Spin(7) \)-manifold. In inverting our construction,
one could use Hitchin's flow on the cosymplectic structure of
Proposition \ref{prop:cosympl}. However, Hitchin's flow does not
preserve the level sets of the multi-moment map: the unit normal is \(
h(d\nu)^\sharp \), but \( \partial/{\partial\nu}=h^2(d\nu)^\sharp \).
It is therefore more natural for us to determine the flow equations
associated to the latter vector field.

\begin{proposition}
  \label{prop:mflow}
  Suppose \( T^3 \) acts freely on a connected eight-manifold \( Y \)
  preserving the torsion-free \( \Spin(7) \)-structure \( \Phi \) and
  admitting a multi-moment map \( \nu \). Let \( M \) be the
  topological reduction \( \nu^{-1}(t)/{T^3} \) for any \( t \) in the
  image of \( \nu \). Then \( M \) is equipped with a \( t
  \)-dependent weakly coherent real-analytic symplectic triple \(
  \sigma_1 \), \( \sigma_2 \), \( \sigma_3 \) and the seven-manifold
  \( \mathcal X_t=\nu^{-1}(t) \) carries a cosymplectic real-analytic
  \( G_2 \)-structure of the form \eqref{eq:cosymplform} . On \(
  \mathcal X_t \) the following evolution equation holds:
  \begin{equation}
    \label{eq:evol}
    \psi'=d(h\phi),
  \end{equation}
  where \( ' \) denotes differentiation with respect to \( t \).

  Conversely, given a cosymplectic real-analytic \( G_2 \)-structure
  of the form \eqref{eq:cosymplform} defined on a seven-manifold \(
  \mathcal X_0 \). Then the flow equation \eqref{eq:evol} admits a
  unique solution on some open neighbourhood of \( \mathcal
  X_0\times\{ 0\} \subset \mathcal X_0\times \bR \), and that solution
  determines a torsion-free \( \Spin(7) \)-structure.
\end{proposition}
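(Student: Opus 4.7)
The plan is to treat the two implications separately, both anchored on the decomposition of \(\Phi\) supplied by \eqref{eq:altSp7form}. For the forward direction, regard \(\Phi\) along the level sets of \(\nu\) as \(\Phi = d\nu \wedge (h\phi)_t + \psi_t\), where both \((h\phi)_t\) and \(\psi_t\) (pulled back to \(\mathcal{X}_t\)) are free of any \(d\nu\)-factor and depend smoothly on \(t = \nu\). Using the vector field \(\partial/\partial t := h^2 (d\nu)^\sharp\), which satisfies \(d\nu(\partial/\partial t) = 1\), one may split \(d = d_{\mathcal{X}} + d\nu \wedge \partial/\partial t\). Then \(d\Phi = 0\) decouples into \(d_{\mathcal{X}}\psi_t = 0\) (the cosymplectic condition, already established) and, from the coefficient of \(d\nu\), the flow equation \(\psi_t' = d_{\mathcal{X}}(h\phi)_t\).

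For the converse, I would set \(Y := \mathcal{X}_0 \times I\) with \(I \subset \bR\) an open interval about \(0\) and coordinate \(t\), and look for a torsion-free \(\Spin(7)\)-structure of the shape \(\Phi = dt \wedge (h\phi)_t + \psi_t\), where \((\phi_t, \psi_t)\) is a \(T^3\)-invariant \(t\)-family of cosymplectic \(G_2\)-structures of form \eqref{eq:cosymplform} extending the given one. Reversing the forward computation, \(d\Phi = 0\) becomes equivalent to the flow equation \(\psi_t' = d(h\phi)_t\) together with \(d\psi_t = 0\); and the latter propagates automatically because \(d\psi_t' = d^2(h\phi)_t = 0\) forces \(d\psi_t\) to be \(t\)-constant and hence zero. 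Once such a family has been produced, the pointwise analysis of Propositions \ref{prop:Phi} and \ref{prop:cosympl} ensures that \(\Phi\) is linearly equivalent to \(\Phi_0\) at every point, so defines a genuine \(\Spin(7)\)-structure, which by construction is closed and therefore torsion-free.

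The crux is therefore the existence of the family \((\phi_t, \psi_t)\). I would fix the connection one-forms \(\theta_i\) and regard the tri-symplectic data \(\sigma_1(t), \sigma_2(t), \sigma_3(t)\) together with a reference volume form \(\vol_M(t)\) on \(M\) as the unknowns, so that \((h, \Theta_i, q^{ij})\) become real-analytic functions of them. Substituting \eqref{eq:cosymplform} into \(\psi_t' = d(h\phi)_t\) and comparing coefficients in the bidegree decomposition with respect to \((\theta_1, \theta_2, \theta_3)\) yields a first-order system of the form \(\partial_t \sigma_i = \mathcal{F}_i(\sigma_j, \partial \sigma_j, \vol_M, \partial \vol_M; F_k)\) on \(M\), with each \(\mathcal{F}_i\) real-analytic in its arguments and with real-analytic initial data by hypothesis. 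The Cauchy-Kovalevskaya theorem then furnishes a unique real-analytic solution on some neighbourhood of \(\mathcal{X}_0 \times \{0\}\). The main obstacle I anticipate is organising the equations cleanly into Cauchy-Kovalevskaya form, and then verifying that the weak coherence of \(\CST\) — non-degeneracy of each \(\sigma_i(t)\) and positivity of their joint span — persists for small \(t\); the latter is automatic by continuity from \(t = 0\), while the former is what requires the explicit algebraic expansion.
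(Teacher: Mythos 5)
Your argument follows the same route as the paper: the splitting \( \Phi = h\,d\nu\wedge\phi+\psi \), contraction of \( d\Phi=0 \) with \( \partial/\partial\nu=h^2(d\nu)^\sharp \) to isolate the flow equation, the observation that \( (d\psi)'=d\psi'=d^2(h\phi)=0 \) propagates the cosymplectic condition, and an appeal to Cauchy--Kovalevskaya for the real-analytic initial-value problem. The one concrete flaw is your decision to \emph{fix} the connection one-forms \( \theta_i \) and evolve only \( \sigma_i \) and \( \vol_M \). Decomposing \( \psi'=d(h\phi) \) by fibre/base bidegree, the component with exactly one \( \theta \)-factor reads
\begin{equation*}
  \theta'_i\wedge\sigma_j-\theta'_j\wedge\sigma_i=\sum_{\ell=1}^3\sigma_\ell\wedge dq^{\ell k},\qquad \sgn(ijk)=+1,
\end{equation*}
which is the paper's equation \eqref{eq:tdertheta}: the right-hand side comes from the terms \( \sigma_i\wedge dq^{ij}\wedge\theta_j \) in \( \sum_i\sigma_i\wedge d\Theta_i \) and is generically nonzero whenever the functions \( q^{ij} \) are non-constant on \( M \) (as in the flat \( \bR^8 \) example, and in contrast to the hyperK\"ahler example where \( dq_{ij}=0 \)). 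With \( \theta'_i=0 \) imposed, this component of the flow equation becomes an unsatisfiable constraint rather than an evolution equation, so your system would be inconsistent in general. The fix is simply to include the \( \theta_i \) among the unknowns; the resulting system \eqref{eq:tdersigma}--\eqref{eq:tderq} is then in Cauchy--Kovalevskaya form as in the paper, and the rest of your argument (continuity of weak coherence, pointwise linear equivalence to \( \Phi_0 \)) goes through.
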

\begin{proof}
  We have
  \begin{equation*}
    \Phi= hd\nu\wedge\phi +\psi.
  \end{equation*}
  This has derivative
  \begin{equation*}
    d\Phi=d\nu\wedge(-dh\wedge \phi-hd\phi)+d\psi.
  \end{equation*}
  By assumption, the \( G_2 \)-structure is cosymplectic, i.e., \(
  d\psi=0 \) on each level set. We therefore find that \( d\Phi=0 \)
  if and only if
  \begin{equation*}
    0=\frac{\partial}{\partial\nu}\hook d\Phi=-d(h\phi)+\psi'.
  \end{equation*}
  Hence we have a torsion-free \( \Spin(7) \)-structure if and only if
  the evolution equation \eqref{eq:evol} is satisfied.

  Observe that equation \eqref{eq:evol} together with an initial
  cosymplectic \( G_2 \)-structure on \( \mathcal X_0 \) already
  ensure that the family consists of cosymplectic structures; the time
  derivative of \( d\psi \) vanishes according to \eqref{eq:evol}.
  
  We note that given real-analytic initial data, the
  Cauchy-Kovalevskaya Theorem applies. Therefore we obtain existence
  and uniqueness of a solution defined on some open neighbourhood of
  \( \mathcal X_0\times \{0\} \subset \mathcal X_0\times \bR\).

  For later use, we shall rewrite the evolution equation as a set of
  first order differential equations for the quantities defined by
  data on \( M \). First we note that
  \begin{equation*}
    \begin{split}
      \psi'&=\sigma'_1\wedge\theta_3\wedge\theta_2+\sigma'_2\wedge\theta_1\wedge\theta_3+\sigma'_3\wedge\theta_2\wedge\theta_1
      +(\theta'_2\wedge\sigma_3-\theta'_3\wedge\sigma_2)\wedge\theta_1\eqbreak+(\theta'_3\wedge\sigma_1-\theta'_1\wedge\sigma_3)\wedge\theta_2
      +
      (\theta'_1\wedge\sigma_2-\theta'_2\wedge\sigma_1)\wedge\theta_3+\vol'_M.
    \end{split}
  \end{equation*}
  \begin{equation*}
    \begin{split}
      d(h\phi)&=d\theta_1\wedge\theta_3\wedge\theta_2+d\theta_2\wedge\theta_1\wedge\theta_3+d\theta_3\wedge\theta_2\wedge\theta_1
      \eqbreak+\sigma_1\wedge d\Theta_1 +\sigma_2\wedge
      d\Theta_2+\sigma_3\wedge d\Theta_3,
    \end{split}
  \end{equation*}
  where
  \begin{equation*}
    \sum_{i=1}^3\sigma_i\wedge d\Theta_i=\sum_{i,j=1}^3\sigma_i\wedge\left(d(q^{ij})\wedge\theta_j+q^{ij}d\theta_j\right).
  \end{equation*}
  From these equations we get the \( t \)-derivatives for \( \sigma_1
  \), \( \sigma_2 \), \( \sigma_3 \):
  \begin{equation}
    \label{eq:tdersigma}
    \sigma'_i=d\theta_i,\quad\textrm{for}\quad i=1,2,3.
  \end{equation}
  The \( t \)-derivative of the connection one-form \(
  \theta=(\theta_1,\theta_2,\theta_3) \) is given by
  \begin{equation}
    \label{eq:tdertheta}
    \theta'_i\wedge\sigma_j-\theta'_j\wedge\sigma_i=\sum_{\ell=1}^3\sigma_\ell\wedge dq^{\ell k},\quad \textrm{for}\quad \sgn(ijk)=+1.
  \end{equation}
  The volume form \( \vol_M \) evolves via
  \begin{equation}
    \label{eq:tdervolM}
    \vol'_M=\sum_{i,j=1}^3q^{ij}\sigma_i\wedge d\theta_j.
  \end{equation}
  Finally the \( t \)-derivatives of entries \( q_{ij} \) of \( Q \)
  may be expressed via
  \begin{equation}
    \label{eq:tderq}
    2q'_{ij}\vol_M=d\theta_i\wedge\sigma_j+\sigma_i\wedge d\theta_j-2q_{ij}\sum_{k,\ell=1}^3q^{k\ell}\sigma_k\wedge d\theta_\ell ,\quad\textrm{for}\quad i,j=1,2,3.
  \end{equation}
  Note that the equations for the entries \( q_{ij} \) now determine
  the evolution of \( h \) and \( G \) via the relations \(
  h^{-4}=\det(Q) \) and \( G^{-1}=h^2Q \), respectively.
\end{proof}

\begin{remark}
  By solving the flow equations we obtain a holonomy \( \Spin(7)
  \)-metric with three-torus symmetry. Indeed, if \( g_M \) is the
  time-dependent metric in the conformal class on \( M \) with volume
  form \( \vol_M \), then the \( \Spin(7) \)-metric is explicitly
  \begin{equation}
    \label{eq:Sp7metr}
    \begin{split}
      &h^2dt^2+g_M+g_{11}\theta_1^2+g_{22}\theta_2^2+g_{33}\theta_3^2+g_{12}\theta_1\theta_2
      +g_{13}\theta_1\theta_3+g_{23}\theta_2\theta_3,
    \end{split}
  \end{equation}
  where \( G=(g_{ij})=h^{-2}Q^{-1} \).
  
  Real-analyticity of the cosymplectic \( G_2 \)-structures is a
  subtle matter. Bryant's study of the Hitchin flow
  \cite{Bryant:nonemb} shows that non-analytic initial half-flat \(
  \SU(3) \)-structures can lead to an ill-posed Hitchin system that
  has no solution.
\end{remark}

\begin{remark}
  Though the torsion-free \( G_2 \)-manifolds studied in
  \cite{Madsen-S:multi-moment} fiber over (weakly) coherently
  tri-symplectic four-manifolds, they do not fit naturally into the
  above framework. The constructed \( G_2 \)-flow does not preserve
  the \( \Spin(7) \)-data.
\end{remark}

Summarising the results discussed so far we have:
           
\begin{theorem}
  Let \( (Y^8,\Phi) \) be a torsion-free \( \Spin(7) \)-manifold with
  a free \( T^3 \) symmetry and admitting a multi-moment map. Then
  the reduction \( M \) at level \( t \) carries a weakly coherent
  real-analytic symplectic triple and the level set \( \mathcal X_t \)
  is the total space of a \( T^3 \)-bundle over \( M \) satisfying
  condition \eqref{eq:curvcond} on the curvature.

  Conversely, let \( (M,\CST) \) be a weakly coherent tri-symplectic
  four-manifold with a closed two-form \( F \in\Omega^2_\bZ(M,\bR^3)
  \) and a choice of orientation compatible volume form. Assume \( F
  \) satisfies condition \eqref{eq:curvcond}. When these data are
  real-analytic, they define a torsion-free \( \Spin(7) \)-metric with
  \( T^3 \)-symmetry.  \qed
\end{theorem}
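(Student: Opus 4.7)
The plan is to recognise the statement as essentially a repackaging of the material already proved in Sections \ref{sec:Sp7red} and \ref{sec:Sp7inv}, so the proof should consist of stitching together Propositions \ref{prop:sympltripl}, \ref{prop:cosympl} and \ref{prop:mflow} rather than any genuinely new computation. I will split the argument into a forward direction (reduction) and a converse direction (reconstruction), and finish with a short remark on the real-analytic hypothesis.

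For the forward direction I would first invoke Proposition \ref{prop:sympltripl} to obtain the three closed two-forms $\sigma_1,\sigma_2,\sigma_3$ on $M$. Weak coherence is essentially already in its proof: the local expressions for $k_1\ell_2m_3\,\sigma_i$ in terms of $e_5,\dots,e_8$ show that these forms are mutually orthogonal self-dual two-forms of positive norm at each point, so they span a maximal positive subbundle of $\Lambda^2T^*M$. Real-analyticity of the triple follows because a torsion-free $\Spin(7)$-metric is Ricci-flat and thus real-analytic in harmonic coordinates, so all tensors built from $\Phi$ and the analytic Killing fields $U_i$ are analytic. Freeness of the $T^3$-action on $\nu^{-1}(t)$ makes $\mathcal X_t\to M$ a principal $T^3$-bundle, and the connection one-form $\theta=U^\flat G^{-1}$ from Section \ref{sec:Sp7red} supplies the connection data. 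The curvature condition \eqref{eq:curvcond} is then extracted by combining two earlier observations: the induced $G_2$-structure $\psi=\iota^*\Phi$ is cosymplectic, and by Proposition \ref{prop:cosympl} the cosymplectic condition on a $G_2$-structure of the form \eqref{eq:cosymplform} is equivalent to symmetry of $QA$.

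For the converse I would start with the given weakly coherent tri-symplectic data $(M,\CST)$, the integral class $F\in\Omega^2_\bZ(M,\bR^3)$, and the orientation-compatible volume form fixing $Q$ and $h$. Chern--Weil theory (as recorded in the remark following Proposition \ref{prop:cosympl}) yields a principal $T^3$-bundle $\pi_M\colon\mathcal X\to M$ with connection $\theta$ satisfying $\pi_M^*(d\theta)=F$. Proposition \ref{prop:cosympl}, together with the hypothesis \eqref{eq:curvcond}, then delivers a cosymplectic $G_2$-structure $(\phi,\psi)$ on $\mathcal X$ of the form \eqref{eq:cosymplform}. The converse half of Proposition \ref{prop:mflow} now applies: since all ingredients are real-analytic, the Cauchy--Kovalevskaya theorem provides a unique solution to the flow \eqref{eq:evol} on an open neighbourhood of $\mathcal X\times\{0\}$ in $\mathcal X\times\bR$, and this solution assembles via $\Phi=h\,dt\wedge\phi+\psi$ into a torsion-free $\Spin(7)$-structure. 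Because the cosymplectic $G_2$-data are $T^3$-invariant by construction, so are $\phi$, $\psi$ and the evolved quantities; the $T^3$-action on $\mathcal X$ extends trivially to $\mathcal X\times I$ and preserves the resulting $\Phi$, and the coordinate $t$ serves as the multi-moment map.

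The main obstacle, or at least the subtlest point, is the real-analyticity assumption: it enters the converse direction through Cauchy--Kovalevskaya in Proposition \ref{prop:mflow}, and as noted in the remark following that proposition, Bryant's work shows that smooth-but-non-analytic initial data for such Hitchin-type flows can be ill-posed. Beyond this, one should verify that the weak coherence of $(\sigma_1,\sigma_2,\sigma_3)$ is preserved for small $t$; this is immediate from openness of the non-degeneracy condition, since by \eqref{eq:tdersigma} the $\sigma_i$ vary smoothly in $t$. Similarly one notes that the evolved curvature continues to satisfy \eqref{eq:curvcond} because this condition is equivalent to $d\psi=0$, which is preserved by the flow as shown in the proof of Proposition \ref{prop:mflow}.
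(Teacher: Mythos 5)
Your proposal is correct and matches the paper's intent exactly: the theorem is stated with \qed precisely because it is a summary of Propositions \ref{prop:sympltripl}, \ref{prop:cosympl} and \ref{prop:mflow} together with the Chern--Weil remark, and your assembly of the forward direction (reduction plus the cosymplectic condition \(\Leftrightarrow\) \eqref{eq:curvcond}) and the converse (bundle construction, cosymplectic \(G_2\)-structure, Cauchy--Kovalevskaya flow) is the argument the paper relies on. The additional observations about \(T^3\)-invariance of the evolved data and openness of weak coherence are sensible sanity checks but introduce nothing beyond the cited results.
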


\section{Examples}
\label{sec:ex}

Let us now turn to some examples that illustrate the analysis of the
previous two sections. First we show that even in the flat case \(
\bR^8 \), with \( T^3\subset\SU(4) \), the geometry of the reduction
procedure is somewhat complicated. Thereafter we study hyperK{\"a}hler
four-manifolds, complementing previous examples that have appeared in
the physics literature
\cite{Gibbons-LPS:domain-walls,Santillan-Gi:toric}.

\begin{example}[The flat model \( \bR^8 \)]

  Consider \( Y=\bR^8=\bC^4 \) endowed with the usual four-form and
  the action of the standard diagonal maximal torus \(
  T^3\subset\SU(4) \). Concretely, \( \Phi \) is given by
  \begin{equation*}
    \begin{split}
      \Phi&=\tfrac12\left(\tfrac{i}{2}(dz_1\wedge
        d\bar{z}_1+dz_2\wedge d\bar{z}_2+dz_3\wedge
        d\bar{z}_3+dz_4\wedge
        d\bar{z}_4)\right)^2\eqbreak+\re(dz_1\wedge dz_2 \wedge dz_3
      \wedge dz_4),
    \end{split}
  \end{equation*}
  and \( T^3 \) acts by \(
  (e^{i\theta_1},e^{i\theta_2},e^{i\theta_3})\cdot(z_1,z_2,z_3,z_4)=(e^{i\theta_1}z_1,e^{i\theta_2}z_2,e^{i\theta_3}z_3,e^{-i(\theta_1+\theta_2+\theta_3)}z_4)
  \). The action is generated by the vector fields \(
  U_j=\re\left\{i(z_j\tfrac{\partial}{\partial
      z_j}-z_4\tfrac{\partial}{\partial z_4})\right\} \), for \(
  j=1,2,3 \). It follows that a multi-moment map \( \nu\colon\,Y\to\bR
  \) is given by
  \begin{equation*}
    \nu(z_1,z_2,z_3,z_4)=\tfrac18\im(z_1z_2z_3z_4).
  \end{equation*}
  By definition, the \( T^3 \) reduction of \( Y \) at level \( t \)
  is the quotient space \( M_t={\nu^{-1}(t)}/{T^3} \). In this case \(
  M_0 \) is singular, whereas \( M_t \) is a smooth manifold for each
  \( t\ne0 \). Indeed, considering \( \Xi_t\colon\,M_t\to\bR^4 \)
  given by
  \begin{equation*}
    \begin{split}
      \Xi
      _t(z_1,z_2,&z_3,z_4)=\left(\tfrac{\norm{z_1}^2{-}\norm{z_4}^2}2,\tfrac{\norm{z_2}^2{-}\norm{z_4}^2}2,\tfrac{\norm{z_3}^2{-}\norm{z_4}^2}2,\re(z_1z_2z_3z_4)\right)\eqbreak
      =:(v_1,v_2,v_3,w),
    \end{split}
  \end{equation*}
  we have global smooth coordinates on \( M_t \) for \( t\ne0 \).

  In this smooth case, writing \(
  (\eta_1,\eta_2,\eta_3)=(dv_1,dv_2,dv_3)G^{-1} \), the two-forms \(
  \sigma_1 \), \( \sigma_2 \), \( \sigma_3 \) are given by
  \begin{gather*}
    16\sigma_1=\eta_1\wedge dw + 4dv_2\wedge dv_3,\quad 16\sigma_2=\eta_2\wedge dw + 4dv_3\wedge dv_1,\\
    16\sigma_3=\eta_3\wedge dw + 4dv_1\wedge dv_2.
  \end{gather*}
  These forms depend (implicitly) on \( t \) via the relations \(
  4g_{ij}=\delta_{ij}\norm{z_i}^2+\norm{z_4}^2 \), for \( i,j=1,2,3
  \), and \( z_1z_2z_3z_4=w+8it \). In particular \( g_{ij} \) is a
  non-constant positive function \( f \), for \( i\neq j \). Thus the
  weakly coherent triple does not specify a coherent triple, in
  particular it is not a hyperK{\"a}hler structure.
  
  The (oriented) conformal class has representative metric
  \begin{equation*}
    \begin{split}
      &
      \frac{h^2}{16}dw^2+g_{11}\eta_1^2+g_{22}\eta_2^2+g_{33}\eta_3^2+f(\eta_1\eta_2+\eta_1\eta_3+\eta_2\eta_3),
    \end{split}
  \end{equation*}
  where \( h^2=\det(G^{-1}) \).

  The curvature \( F = (F_1,F_2,F_3) \) of the principal \( T^3
  \)-bundle \( \nu^{-1}(t)\to M_t \) is given by
  \begin{equation*}
    \begin{split}
      F_1&=2th^2\eta_w\wedge((2g_{22}g_{33}-f(g_{22}+g_{33}))\eta_1\eqbreak+(g_{33}-f)(g_{22}-2f)\eta_2+(g_{22}-f)(g_{33}-2f)\eta_3),\\
      F_2&=2th^2\eta_w\wedge((2g_{11}g_{33}-f(g_{11}+g_{33}))\eta_2\eqbreak+(g_{11}-f)(g_{33}-2f)\eta_3+(g_{33}-f)(g_{11}-2f)\eta_1),\\
      F_3&=2th^2\eta_w\wedge((2g_{11}g_{22}-f(g_{11}+g_{22}))\eta_3\eqbreak+(g_{22}-f)(g_{11}-2f)\eta_1+(g_{11}-f)(g_{22}-2f)\eta_2).
    \end{split}
  \end{equation*}
  where \( \eta_w=g_{ww}^{-1}\,dw \) satisfies \(
  \eta_w((dw)^\sharp)=1 \) and \( \eta_w((dv_i)^\sharp)=0 \), for \(
  i=1,2,3 \). Note that \( F \neq F^+ \).

  In the singular case \( t=0 \), the three-torus collapses in three
  different ways: to a point, a circle or a two-torus. At the origin
  \( (z_1,z_2,z_3,z_4)=0 \) the three-torus collapses to a point.
  Next, if \( z_i=z_j=z_k=0 \) for exactly three different indices,
  then the torus collapses to a circle. In terms of the quadruple \(
  (v_1,v_2,v_3,w) \) this collapsing happens for \( w=0 \) when \( v_1
  \), \( v_2 \), \( v_3 \) satisfy one of the following constraints:
  \( (v_1=v_2=v_3\leq0) \), \( (v_1=v_2=0,\,v_3\geq0) \), \(
  (v_1=v_3=0,\,v_2\geq0) \) or \( (v_2=v_3=0,\,v_1\geq0) \). Finally,
  if \( z_i=z_j=0 \) for exactly two different indices, the \( T^3 \)
  collapses to a two-torus. This happens for \( w=0 \) when \( v_1 \),
  \( v_2 \), \( v_3 \) satisfy one of: \( (v_1=v_2\leq0) \), \(
  (v_1=v_3\leq0) \), \( (v_1=0,\,v_2,v_3\geq0) \), \( (v_2=v_3\leq0)
  \), \( (v_2=0,\,v_1,v_3\geq0) \) or \( (v_3=0,\,v_1,v_2\geq0) \).
\end{example}

\begin{example}[Examples from hyperK{\"a}hler manifolds]
  \label{ex:K3}
  Let \( M \) be a hyperK\"{a}hler four-manifold. Then \( M \) comes
  equipped with three symplectic forms \( \sigma_1 \), \( \sigma_2 \),
  \( \sigma_3 \) that satisfy the relations \(
  \sigma_i\wedge\sigma_j=\delta_{ij}\sigma_k^2\) for \( i,j,k=1,2,3
  \). Choosing the volume form \( \vol^0_M = \tfrac12\sigma_1^2\), we
  have that \( Q=\diag(1,1,1) \). The compatible hyperK{\"a}hler
  metric is denoted by \( g_M^0 \).

  Let \( \sigma=(\sigma_1,\sigma_2,\sigma_3) \) denote the
  hyperK{\"a}hler triple and assume there is a constant matrix \( A =
  (a_{ij}) \) such that \( \sigma A\in \Omega^2_{\bZ}(M,\bR^3) \).
  Then we may construct a \( T^3 \)-bundle over \( M \) with
  connection one-form \( \theta \) that has curvature \( F =\sigma A
  \). The total space \( \mathcal X_0 \) of this bundle carries the \(
  G_2 \)-structure of Proposition \ref{prop:cosympl}, which is now
  cosymplectic if and only if \( A \) is symmetric. The associated
  metric on \( \mathcal X_0 \) is complete if the hyperK{\"a}hler base
  manifold is complete.

  We shall illustrate how one may solve the flow equations
  \eqref{eq:tdersigma}--\eqref{eq:tderq} starting from the above data
  at \( t=0 \). As an a priori simplifying assumption we consider the
  case when \( F'=0 \), i.e., the curvature is \( t \)-independent.
  Then the differential equations for the symplectic triple simply
  reads \( \sigma'= \Omega A \), where \( \Omega=\sigma(0) \).
  Integrating, we find that \( \sigma(t)=\Omega(1+tA) \).

  We next solve the equations \eqref{eq:tdervolM} and
  \eqref{eq:tderq}. First we observe that the volume develops
  according to the equation \( \vol'_M=v\vol_M^0 \), where \( v =
  2\Tr(Q^{-1}(1+tA)A) \). We may therefore write \(
  \vol_M(t)=V(t)\vol_M^0 \), where \( V'=v \) and \( V(0)=1 \). The
  equation for \( Q' \) now takes the form \( VQ'=2(1+At)A-vQ \). It
  follows that we must find the unique solution of the differential
  equation \( \ln(V)'=2Tr((1+tA)^{-1}A) \), \( V(0)=1 \). We find that
  \( V(t) = \det(1+tA)^2 \). Consequently, \( \vol_M \) and \( Q \)
  take the form \( \vol_M(t)=\det(1+tA)^2\vol^0_M \) and \(
  \det(1+tA)^2Q(t)=(1+tA)^2 \). Note also that \( h(t)=\det(1+tA) \)
  and that \( dq_{ij}(t)=0 \). The latter observation implies, by
  \eqref{eq:tdertheta}, that the connection one-form is \( t
  \)-independent, \( \theta(t)=\theta \).

  The above solution is defined on \( \mathcal X_0\times I \), where
  the interval \( I \subset \bR \) is determined by non-degeneracy of
  the matrix \( 1+tA \) and \( 0 \in I \). By uniqueness of the
  solution on \( \mathcal X_0\times I \), we deduce that the condition
  \( F'=0 \) already follows from the initial data, i.e., it is not a
  simplifying assumption.

  The torsion-free \( \Spin(7) \)-structure corresponding to the above
  solution has associated metric \( g \) given by
  \begin{equation}
    \begin{split}
      h^2(t)dt^2+h(t)g^0_M&+h(t)^{-2}\left(\sum^3_{i=1}q^{ii}(t)\theta_i^2+\sum_{1\leq
          i<j\leq 3}q^{ij}(t)\theta_i\theta_j\right).
    \end{split}
  \end{equation}
  If the initial hyperK{\"a}hler four-manifold is complete, we may
  describe completeness properties of \( g \) in terms of the matrix
  \( A \). Provided \( g \) remains finite and non-degenerate,
  completeness corresponds to completeness of \( h(t)^2dt^2 \) on \( I
  \), cf. \cite{Bishop-O:warped}. We now find that \( g \) is
  half-complete, cf. \cite{Apostolov-S:K-G2}, if and only if \( A \)
  does not have two eigenvalues of opposite sign; the metric is
  complete only for \( A=0 \).
\end{example}

\providecommand{\bysame}{\leavevmode\hbox to3em{\hrulefill}\thinspace}
\providecommand{\MR}{\relax\ifhmode\unskip\space\fi MR }
% \MRhref is called by the amsart/book/proc definition of \MR.
\providecommand{\MRhref}[2]{%
  \href{http://www.ams.org/mathscinet-getitem?mr=#1}{#2} }
\providecommand{\href}[2]{#2}

\begin{small}
  \parindent0pt\parskip\baselineskip T.B.Madsen

  Department of Mathematics and Computer Science, University of
  Southern Denmark, Campusvej 55, DK-5230 Odense M, Denmark
  
  \textit{and}

  CP\textsuperscript3-Origins, Centre of Excellence for Particle
  Physics Phenomenology, University of Southern Denmark, Campusvej 55,
  DK-5230 Odense M, Denmark

  \textit{E-mail}: \url{tbmadsen@imada.sdu.dk}
\end{small}

\end{document}